\newtheorem{theorem}{Theorem}
\newtheorem{example}{Example}
\newenvironment{remark}[1][Remark]{\begin{trivlist}
\item[\hskip \labelsep {\bfseries #1}]}{\end{trivlist}}
\DeclareMathOperator{\hess}{Hess}
\DeclareMathOperator{\supp}{supp}
\providecommand{\abs}[1]{\lvert#1\rvert}
\providecommand{\inner}[2]{\langle#1 , #2\rangle}
\providecommand{\supover}[3]{ {\underset{#1 \in #2}{\sup}} { #3(#1) }}
\providecommand{\maxover}[3]{ {\underset{#1 \in #2}{\max}} { #3(#1) }}
\numberwithin{equation}{section}
\title{A Liouville-type Theorem for Smooth Metric Measure Spaces}
\author{Kevin Brighton\footnote{Department of Mathematics, University of California, Santa Barbara, CA 93106, USA. E-mail: kcbrighton@math.ucsb.edu}}
\begin{document}

\maketitle

\begin{abstract}
For smooth metric measure spaces $(M, g, e^{-f} dvol)$ we prove a Liuoville-type theorem when the Bakry-Emery Ricci tensor is nonnegative. ÊThis generalizes a result of Yau, which is recovered in the case $f$ is constant. This result follows from a gradient estimate for f-harmonic functions on smooth metric measure spaces with Bakry-Emery Ricci tensor bounded from below.
\end{abstract}

\section{Introduction}

 In this paper we study smooth metric measure spaces.  A smooth metric measure space is a complete Riemannian manifold together with a measure conformal to the Riemannian measure. More precisely a smooth metric measure space is a triple, $(M^n , g , e^{-f} dvol)$, where M is a complete Riemannian manifold of dimension $n$, $f$ is a smooth real-valued function on $M$ and $dvol$ is the Riemannian volume density.  

Smooth metric measure spaces carry natural analogs of the Ricci tensor.  In this paper we study the Bakry-Emery Ricci tensor, or $\infty$-Bakry-Emery Ricci tensor, defined by $Ric_f = Ric + \hess (f)$.  This tensor can be thought of as the limit of the N-Bakry-Emery Ricci tensors defined by $Ric^N_f = Ric + \hess(f)- \frac{1}{N} df \otimes df$.  The Bakry-Emery Ricci tensor, as well as its finite dimensional variants, were studied extensively by Bakry and Emery in their investigations of diffusion processes.  It also arises in the theory of the Ricci flow, as $Ric_f = \lambda g$ is precisely the gradient Ricci soliton equation.  Smooth metric measure spaces also carry a natural analog of the Laplace-Beltrami operator, the f-Laplacian, defined for a function $u$ by $\Delta_f (u)= \Delta(u) - \inner{\nabla u}{\nabla f}$.

 A great deal is known about the geometric and topological implications of the Ricci tensor.   The natural question, first studied by Lichnerowicz \cite{MR0268812,MR0300228}, is which of these results hold for the Bakry-Emery Ricci tensor.  This is an active field of current research. See, for example \cite{MR2170766,MR2016700,arXiv:math.DG/0612532,MR2577473, arXiv:1003.0191} or \cite{weisurvey} and the references therein. 
 
In $1976$ Yau  \cite{MR0417452} famously showed that any positive or bounded harmonic function on a complete Riemannian manifold with $Ric \geq 0$ must be constant.  The goal of this paper is to establish a corresponding result for f-harmonic functions on smooth metric measure spaces with $Ric_f \geq 0$.  For positive f-harmonic functions this was shown in \cite{arXiv:math.DG/0612532} for smooth metric measure spaces with uniformly positive Bakry-Emery Ricci tensor.  Such a result, however,  is impossible for positive f-harmonic functions on smooth metric measure spaces with nonnegative Bakry-Emery Ricci tensor, as the following example illustrates:
\begin{example}
\label{example}
Consider $\mathbb{R}^n$ with the usual metric and let $f(x) = x_1$, the first coordinate function.  Clearly $Ric_f \geq 0$ in this case.  Now consider the function $u(x) = e^{x_1}$ and note
\begin{equation}
\Delta_f (u) = \Delta(u) - \inner{\nabla u}{ \nabla f} = u'' - u' =0.
\end{equation}
Thus $u$ is a non-constant positive f-harmonic function on $(\mathbb{R}^n, g , e^{-f} dvol)$.
\end{example}
On the other hand for smooth metric measure spaces with nonnegative Bakry-Emery Ricci tensor we show that bounded f-harmonic functions are constant.  
\begin{theorem}
\label{Liouville}
Let $(M,g,{e^{-f}} dvol)$ be a complete smooth metric measure space with ${Ric_f \geq 0}$.  If $u$ is a bounded f-harmonic function defined on $M$ then $u$ is constant.
\end{theorem}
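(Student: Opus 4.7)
The plan is to deduce the theorem from a local gradient estimate for positive $f$-harmonic functions, in the spirit of Yau's original argument but circumventing the obstruction in Example~\ref{example}. Since $u$ is bounded, after adding a constant we may assume $u$ is strictly positive on $M$, and it suffices to show $|\nabla u|(p) = 0$ for every $p \in M$.

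The principal trick I would use is the substitution $h = u^\epsilon$ for a small parameter $\epsilon \in (0,1)$. Because $u$ is bounded above and below by positive constants, so is $h$. A direct computation from $\Delta_f u = 0$ yields
\[
\Delta_f h \;=\; \frac{\epsilon - 1}{\epsilon}\,\frac{\abs{\nabla h}^2}{h},
\]
so $h$ is $f$-superharmonic with a favorably signed coefficient $(\epsilon-1)/\epsilon < 0$. Combining this with the weighted Bochner identity
\[
\tfrac{1}{2}\Delta_f \abs{\nabla h}^2 \;=\; \abs{\hess h}^2 + \inner{\nabla h}{\nabla \Delta_f h} + Ric_f(\nabla h,\nabla h),
\]
the hypothesis $Ric_f \geq 0$, and the explicit expression for $\Delta_f h$ above, should produce a differential inequality of roughly the form $\Delta_f \abs{\nabla h}^2 \gtrsim c\, h^{-2}\abs{\nabla h}^4$ plus correction terms that are linear in $\abs{\nabla \abs{\nabla h}^2}$.

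To pass to a local estimate I would fix $p\in M$ and a standard smooth cutoff $\phi$ supported in $B_{2R}(p)$ with $\phi\equiv 1$ on $B_R(p)$ and the usual bounds on $\abs{\nabla \phi}$ and $\hess\phi$. Applying the maximum principle to $F = \phi^2\abs{\nabla h}^2$ and invoking $\nabla F = 0$ and $\Delta_f F \leq 0$ at an interior maximum converts the Bochner inequality into a pointwise algebraic bound on $\abs{\nabla h}(p)$ in terms of $\sup_{B_{2R}}h$, $\inf_{B_{2R}} h$, and $R$. Sending $R \to \infty$ then forces $\abs{\nabla h}\equiv 0$, and hence $u$ constant.

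The step I expect to be hardest is the control of $\Delta_f \phi$. Under the bare assumption $Ric_f \geq 0$ the classical Laplacian comparison for $\Delta_f r$ is unavailable in pointwise form; this failure is essentially what makes Example~\ref{example} possible. The rescue is that all the cross-terms involving $\nabla f$ enter multiplied by $\abs{\nabla h}$, and the positive quartic term $c\,h^{-2}\abs{\nabla h}^4$ produced by the Bochner step (available precisely because $(\epsilon-1)/\epsilon < 0$) can be used via Young's inequality to absorb them, provided $h$ is pinched between positive constants. Once this absorption is arranged, the constants in the resulting gradient estimate depend on $\epsilon$, $\sup u$, $\inf u$, and $R$ only, with no further dependence on $f$, and letting $R\to\infty$ completes the argument.
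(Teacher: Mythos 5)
Your overall architecture matches the paper's: substitute $h=u^\epsilon$, apply the weighted Bochner formula, use $\Delta_f h = \frac{\epsilon-1}{\epsilon}\frac{\abs{\nabla h}^2}{h}$ and $\abs{\hess h}^2 \geq \frac{1}{n}\abs{\Delta h}^2$ to extract a positive quartic term, then run a cutoff/maximum-principle argument and let $R\to\infty$. Your treatment of the Bochner cross term $\frac{2(\epsilon-1)}{n\epsilon h}\abs{\nabla h}^2\inner{\nabla f}{\nabla h}$ by Young's inequality, absorbing one piece into the quartic term and the other into the $\frac{1}{n}\inner{\nabla f}{\nabla h}^2$ term, is essentially equivalent to the paper's two-case analysis and works for suitable $\epsilon$.

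The genuine gap is in the step you yourself flag as hardest: the control of $\Delta_f\phi$. At the interior maximum of $G=\phi\abs{\nabla h}^2$ the inequality takes the form $c\,h^{-2}G \leq -\Delta_f\phi + \cdots$, and $-\Delta_f\phi$ contains the term $-g'(r)\,\Delta_f r = -g'(r)\bigl(\Delta r - \inner{\nabla f}{\nabla r}\bigr)$. This term is multiplied only by $G \sim \abs{\nabla h}^2$, not by a higher power of $\abs{\nabla h}$, so Young's inequality against the quartic term $c\,h^{-2}\abs{\nabla h}^4$ leaves a residual proportional to $\abs{g'}^2\inner{\nabla f}{\nabla r}^2 \lesssim \frac{1}{R^2}\abs{\nabla f}^2$, which does not tend to zero as $R\to\infty$ when $\abs{\nabla f}$ grows linearly or faster (e.g.\ $f(x)=\abs{x}^2$ on $\mathbb{R}^n$ has $Ric_f = 2g \geq 0$ and $\abs{\nabla f}=2\abs{x}$ unbounded). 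Your claim that the resulting constants have ``no further dependence on $f$'' therefore cannot be right. The missing ingredient is the Wei--Wylie $f$-Laplacian comparison: from the Bochner formula applied to $r$ one gets $\partial_r(\Delta_f r) \leq -Ric_f(\partial_r,\partial_r) \leq (n-1)H^2$, so under $Ric_f\geq 0$ the quantity $\Delta_f r$ is non-increasing along minimal geodesics and $\Delta_f r(q_0) \leq \alpha := \max_{d(p,q)=1}\Delta_f r$ for all $q_0$ with $d(q,q_0)\geq 1$. This yields $-g'\Delta_f r \leq \frac{c\alpha}{R}$, a bound which does depend on $f$ (through $\alpha$) but is independent of $R$, which is all the limiting argument requires. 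Without this comparison, or some substitute for it, your absorption scheme does not close.
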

In the case of the N-Bakry-Emery Ricci tensor, for finite N, similar Liouville-type theorems are obtained in \cite{MR2170766, MR2487589, Hua2010, MR2296933}.  The Liouville-type theorem is proved by establishing the following estimate on the gradient of an f-harmonic function.
  \begin {theorem}
  \label{uestimate}
Let $(M,g,{e^{-f}} dvol)$ be a complete smooth metric measure space with ${Ric_f \geq -(n-1)H^2}$ where $H \geq 0$.  If $u$ is a positive f-harmonic function defined on $\overline{B(q;2R)}$ with $R \geq 1$, then for any $q_0 \in \overline{B(q;R)}$,
\begin{equation}
\abs{\nabla u}(q_0) \leq \sqrt{\frac{c_1(n, \alpha)}{R} + c_2(n)H^2} \supover{p}{B(q;2R)}{u},
\end{equation}
\noindent where $\alpha = \maxover{p}{\{p:d(p,q)=1\}}{\Delta_f r}$.
\end{theorem}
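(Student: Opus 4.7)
The plan is to adapt Yau's cutoff-and-maximum-principle strategy to the weighted setting. First, choose an auxiliary function $\psi$ built from the positive $f$-harmonic $u$, apply the weighted Bochner formula
\[
\tfrac{1}{2}\Delta_f|\nabla\psi|^2 = |\hess\psi|^2 + \inner{\nabla\psi}{\nabla\Delta_f\psi} + Ric_f(\nabla\psi,\nabla\psi),
\]
and use $Ric_f \geq -(n-1)H^2$ to obtain a pointwise lower bound for $\Delta_f|\nabla\psi|^2$. Next, localize by multiplying by a cutoff $\phi = \phi(r)$, $r = d(\cdot, q)$, chosen smooth and nonincreasing with $\phi \equiv 1$ on $B(q;R)$, $\phi \equiv 0$ off $B(q;2R)$, and $|\phi'|^2/\phi$, $|\phi''|$ controlled by $c/R^2$. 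Evaluating $\Delta_f(\phi|\nabla\psi|^2)$ at an interior maximum $p_0$ of $F := \phi|\nabla\psi|^2$, and using $\nabla F(p_0) = 0$ together with $\Delta_f F(p_0) \leq 0$, I would derive an algebraic inequality for $F(p_0)$ in terms of cutoff data and curvature, solve for $F(p_0)$, and translate the result back into a bound on $|\nabla u|/\sup u$.

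Two weighted-setting difficulties need special handling. In Yau's original argument, taking $\psi = \log u$ produces a crucial positive $|\nabla\psi|^4/n$ term through $|\hess\psi|^2 \geq (\Delta\psi)^2/n$, exploiting $\Delta\psi = -|\nabla\psi|^2$ coming from $\Delta u = 0$. Here only $\Delta_f u = 0$ is known, so $\Delta\psi$ carries an uncontrolled drift $\inner{\nabla\psi}{\nabla f}$. To avoid any hypothesis on $f$ beyond what is built into $Ric_f$, I plan to take $\psi = u^\epsilon$ for a suitable $\epsilon \in (0,1)$: a direct computation yields $\Delta_f\psi = \tfrac{\epsilon-1}{\epsilon}\psi^{-1}|\nabla\psi|^2$, so the term $\inner{\nabla\psi}{\nabla\Delta_f\psi}$ in the Bochner formula itself contributes a positive $\tfrac{1-\epsilon}{\epsilon}\psi^{-2}|\nabla\psi|^4$, furnishing the necessary quadratic-in-$|\nabla\psi|^2$ term without invoking the Hessian lower bound at all.

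The second obstacle is that the cutoff contributions $\inner{\nabla\phi}{\nabla|\nabla\psi|^2}$ and $\Delta_f\phi$ both reduce (via the chain rule) to expressions involving $\Delta_f r$, which under $Ric_f \geq -(n-1)H^2$ alone admits no universal upper bound of the form available in the classical Laplacian comparison. This is precisely why $\alpha = \max_{d(\cdot,q)=1}\Delta_f r$ appears in the hypothesis: Wei--Wylie's weighted Laplacian comparison propagates the initial bound $\Delta_f r \leq \alpha$ at $r = 1$ to an explicit upper bound on $\Delta_f r$ for all $r \geq 1$ depending on $\alpha$, $H$, and $r$. The assumption $R \geq 1$ ensures that the cutoff is supported where this propagated bound applies, so one can feed it into the inequality at $p_0$. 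The hard part is the final bookkeeping: after applying Cauchy--Schwarz to absorb the mixed cutoff cross-terms into the dominant $\psi^{-2}|\nabla\psi|^4$ contribution, the coefficient of the quadratic term in $F(p_0)$ must stay strictly positive (which constrains the choice of $\epsilon$), and one must then solve the resulting inequality to produce the claimed $c_1(n,\alpha)/R + c_2(n)H^2$ bound before taking square roots.
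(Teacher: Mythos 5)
Your plan is sound and would deliver the stated estimate, but it departs from the paper's proof at the one genuinely delicate step, so the two are worth comparing. Both arguments share the same architecture: set $\psi=u^{\epsilon}$, apply the weighted Bochner formula, localize with a radial cutoff $\phi$, and at an interior maximum of $\phi\abs{\nabla\psi}^2$ invoke the Wei--Wylie comparison for $\Delta_f r$ (seeded by $\alpha$ at $r=1$, which is exactly why $\alpha$ and $R\ge 1$ appear). The difference is where the positive $\psi^{-2}\abs{\nabla\psi}^4$ term comes from. The paper retains the Hessian term via $\abs{\hess\psi}^2\ge(\Delta\psi)^2/n$ with $\Delta\psi=\Delta_f\psi+\inner{\nabla\psi}{\nabla f}$; squaring produces a cross term $\tfrac{2(\epsilon-1)}{n\epsilon}\psi^{-1}\abs{\nabla\psi}^2\inner{\nabla\psi}{\nabla f}$ of indeterminate sign, which is tamed by a two-case analysis on whether $\inner{\nabla\psi}{\nabla f}$ is at most or at least $a\psi^{-1}\abs{\nabla\psi}^2$, absorbing it into the quartic term in one case and into $\tfrac1n\inner{\nabla\psi}{\nabla f}^2$ in the other; this forces $\epsilon\in(\tfrac34,1)$ (the paper takes $\epsilon=\tfrac78$, $a=\tfrac12$) and yields the quartic coefficient $\tfrac{7n-6}{49n}$ in \eqref{estimate}. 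You instead discard $\abs{\hess\psi}^2\ge 0$ and harvest the quartic term $\tfrac{1-\epsilon}{\epsilon}\psi^{-2}\abs{\nabla\psi}^4$ entirely from $\inner{\nabla\psi}{\nabla\Delta_f\psi}$ --- a quantity the paper also computes in \eqref{mixed}, but only as one contribution among several. Your observation is correct (the identity $\Delta_f\psi=\tfrac{\epsilon-1}{\epsilon}\psi^{-1}\abs{\nabla\psi}^2$ uses only $\Delta_f u=0$), and it buys a real simplification: no case analysis, no drift term to control, any $\epsilon\in(0,1)$ works, and for $\epsilon=\tfrac78$ you recover \eqref{estimate} with the constant $\tfrac{7n-6}{49n}$ improved to $\tfrac17$; the paper's extra nonnegative $\tfrac1n(\cdots)^2$ terms are never exploited downstream anyway. (This shortcut is special to the power function: for $h=\log u$ the mixed term carries no quartic contribution, which is why Theorem~\ref{logestimate} must keep the Hessian bound and needs its extra hypothesis --- a distinction you correctly identify.) One small imprecision in your write-up: $R\ge1$ helps not because the cutoff is supported where the propagated comparison holds (its support contains $B(q;1)$), but because $\phi$ is constant on $B(q;R)\supseteq B(q;1)$, so $\Delta_f r$ enters only through $g'$ and $g''$, which vanish unless $r\ge R\ge 1$; equivalently, a maximum point lying in $B(q;1)$ is handled as a separate, easy case, as in the paper.
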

\noindent The constants above are given explicitly in \eqref{final}.  In the case $H=0$, Theorem~\ref{Liouville} then immediately follows by taking a limit as $R$ tends to infinity.  The next section is dedicated to the proof of this estimate.  A related estimate is obtained in \cite{arXiv:0902.2226}.  

The idea of Yau's original proof was to apply the Bochner formula to the log of a positive harmonic function to obtain an initial estimate, then multiply by a cut-off function and apply the maximum principle and Laplacian comparison.  On smooth metric measure spaces there is a Bochner formula which looks similar to the classical Bochner formula, see \cite{MR2577473} for example.  This formula is given by
\begin{equation}
\frac{1}{2} \Delta_f \abs{\nabla u}^2 = \abs{ \hess \, u}^2 + \inner{\nabla u}{\nabla( \Delta_f u)} + Ric_f (\nabla u , \nabla u).
\end{equation}
The difference between the two Bochner formulae is the Hessian term which is controlled by the Laplacian.  In the context of smooth metric measure spaces the Laplacian is related to the f-Laplacian by $\Delta u = \Delta_f u +\inner{\nabla u}{\nabla f}$.  Thus, when squared, the first and last terms will be positive, so the issue is controlling the cross term.  The cross term is controlled by choosing a suitable function $h$ of the given f-harmonic function and examining two cases depending on $h$ and the relative magnitudes of $\Delta h$ and  $\inner{\nabla h}{\nabla f}$. The cross term is then collected with the larger of the leading and final terms to obtain an initial estimate.  Then we multiply by a cut-off function, use the maximum principle and apply the f-Laplacian estimate of Wei-Wylie \cite[Theorem 2.1]{MR2577473} to control the resulting terms.

In the final section we use the the ideas of the proof of Theorem 2 to obtain an estimate for $\abs{\nabla \log(u)}$ as in Yau \cite{MR0417452}.  However, choosing $h= \log(u)$ a gap develops between the constants required to define the two cases.  Thus the estimate is only established with an additional assumption on the relative sizes of $\nabla \log(u)$ and $\inner{\nabla \log(u)}{\nabla f}$.  Specifically we establish the following:
\begin {theorem}
\label{logestimate}
Let $(M,g,{e^{-f}} dvol)$ be a smooth complete metric measure space with $Ric_f \geq -(n-1)H^2$.  If $u$ is a positive f-harmonic function defined on $\overline{B(q;2R)}$ with $R \geq 1$, such that either $\inner{\nabla f}{\nabla \log(u)} \leq a\abs{\nabla \log (u)}^2$ for some $0\leq a<\frac{1}{2}$ or $\inner{\nabla f}{\nabla \log(u)} \geq 2\abs{\nabla \log (u)}^2$ then for any $q_0 \in \overline{B(q;R)}$,
\begin{equation}
 \abs{\nabla (\log u)}(q_0)  \leq \sqrt{c_1(n, a)H^2 + \frac{c_2(n,a,\alpha)}{R}}.
\end{equation}
\end{theorem}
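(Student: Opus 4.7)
Set $h = \log u$, so that $\nabla h = \nabla u / u$ and, using $\Delta_f u = 0$, one computes $\Delta_f h = -\abs{\nabla h}^2$. Let $G = \abs{\nabla h}^2$ and apply the f-Bochner formula to $h$. Since $\nabla \Delta_f h = -\nabla G$ and $Ric_f \geq -(n-1)H^2$, this yields
\[
\tfrac{1}{2}\Delta_f G \;\geq\; \abs{\hess h}^2 - \inner{\nabla h}{\nabla G} - (n-1)H^2 G.
\]
The plan is to bound $\abs{\hess h}^2$ below by a multiple of $G^2$ using the hypothesis on $\inner{\nabla f}{\nabla h}$, convert the resulting differential inequality into a pointwise inequality for $\phi G$ via a cutoff argument, and invoke the Wei-Wylie f-Laplacian comparison to control the cutoff terms.

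For the Hessian bound, use $\abs{\hess h}^2 \geq (\Delta h)^2 / n$ together with the identity $\Delta h = \Delta_f h + \inner{\nabla h}{\nabla f} = -G + \inner{\nabla h}{\nabla f}$. Writing $\beta = \inner{\nabla h}{\nabla f}/G$ on $\{G>0\}$ gives $(\Delta h)^2 = (1-\beta)^2 G^2$. Under the first hypothesis $\beta \leq a < 1/2$, so $(1-\beta)^2 \geq (1-a)^2 > 1/4$; under the second, $\beta \geq 2$, so $(1-\beta)^2 \geq 1$. In either case $\abs{\hess h}^2 \geq c(a)\, G^2$ for an explicit $c(a) > 0$, giving the initial estimate
\[
\tfrac{1}{2}\Delta_f G \;\geq\; c(a)\, G^2 - \inner{\nabla h}{\nabla G} - (n-1)H^2 G.
\]

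Following the strategy of Theorem~\ref{uestimate}, multiply by a radial cutoff $\phi = \eta(r/R)$ supported on $\overline{B(q;2R)}$ and equal to $1$ on $\overline{B(q;R)}$, chosen so that $\abs{\nabla\phi}^2/\phi \leq C/R^2$. At an interior maximum point $p_0$ of $\phi G$ (assuming, as we may, that $G(p_0)>0$, and using Calabi's trick if $p_0$ lies in the cut locus), the relations $\nabla(\phi G)=0$ and $\Delta_f(\phi G)\leq 0$ permit us to replace $\nabla G$ by $-G\nabla\phi/\phi$ and to bound $\phi\Delta_f G$ by $-G\Delta_f\phi - 2\inner{\nabla\phi}{\nabla G}$. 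Substituting into the initial estimate and multiplying by $\phi$, the cross term becomes a multiple of $G\inner{\nabla h}{\nabla\phi}$, which a weighted Cauchy-Schwarz absorbs into $c(a)(\phi G)^2$ at the cost of an additional $(\abs{\nabla\phi}^2/\phi)\, G$ contribution. The f-Laplacian comparison theorem of Wei-Wylie then bounds $\Delta_f \phi$ on the annulus $B(q;2R)\setminus B(q;1)$ in terms of $n$, $H$, $R$ and the boundary quantity $\alpha$. The resulting quadratic inequality in $\phi G(p_0)$, solved and evaluated at any $q_0 \in \overline{B(q;R)}$, yields the stated bound.

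\textbf{Main obstacle.} The crux is the restriction on $\beta$. In the proof of Theorem~\ref{uestimate} the free parameter in the choice of $h$ as a power of $u$ allows one to tune the interaction between $\abs{\hess h}^2$ and the cross term so that no assumption on $\beta$ is needed. With $h = \log u$ this flexibility vanishes, and the Hessian coefficient $(1-\beta)^2$ itself degenerates as $\beta \to 1$, so $\abs{\hess h}^2$ fails to control $G^2$. After absorbing the cross term via Cauchy-Schwarz a further fraction of this already-small coefficient is consumed, which is precisely why the two admissible regions must be separated by the gap $\beta \in (a, 2)$ rather than meeting at $\beta=1$. Quantifying this loss carefully to extract explicit constants $c_1(n,a)$ and $c_2(n,a,\alpha)$, and checking that the bound survives uniformly throughout both intervals, is the technical heart of the proof.
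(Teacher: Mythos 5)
Your proposal is correct and follows essentially the same route as the paper: the identity $\Delta_f(\log u) = -\abs{\nabla \log u}^2$, the f-Bochner formula with $\abs{\hess h}^2 \geq (\Delta h)^2/n$, a case split on $\beta = \inner{\nabla f}{\nabla h}/\abs{\nabla h}^2$ to secure a positive coefficient on $\abs{\nabla h}^4$, and then the identical cutoff/maximum-principle/Wei-Wylie comparison argument from Theorem~\ref{uestimate}. (Your bound $(1-\beta)^2 \geq (1-a)^2$ is in fact marginally sharper than the paper's $1-2a$, which discards the $\beta^2$ term; both are positive on the stated ranges, so the arguments agree in substance.)
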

\begin{remark}
Note that when $f$ is constant this recovers the original estimate of Yau \cite{MR0417452}.
\end{remark}
\begin{remark}
	The assumption on the relative sizes of $\inner{\nabla f}{\nabla \log(u)}$ and $\abs{\nabla \log (u)}^2$ here is required as Example \ref{example} demonstrates.  Note that in Example \ref{example}, $\frac{\inner{\nabla f}{\nabla \log(u)}}{\abs{\nabla \log (u)}} = 1$.
\end{remark}

\section{Proof of Theorems 1 and 2}
The proof of Theorem~\ref{uestimate} will follow from applying the Bochner formula to an appropriate function $h$ of a given positive f-harmonic function $u$.  As noted above, the problem term in the Bochner formula is the Hessian term, which is controlled with the Laplacian.  Indeed, if $\inner{\nabla u}{\nabla f}$ is small in relation to $\Delta u$, then $\Delta u$ can be controlled by $\Delta_f u$ since $\Delta u = \Delta_f u +\inner{\nabla u}{\nabla f}$.   Applying the Bochner formula to $\log(u)$ then yields the desired estimate, just as in the classical case.  On the other hand, if  $\inner{\nabla u}{\nabla f}$ is large in relation to $\Delta u$ this approach fails.  In this case $\Delta u$ is controlled by $\inner{\nabla u}{\nabla f}$.  Here, applying the Bochner formula directly to $u$ will give the estimate.  These two estimates can be combined directly, but a more convenient method suggested by Dr. Zhiqin Lu is to apply the Bochner formula to $u^\epsilon$ for some $\epsilon \in (0,1)$.  This is the proof we follow below.
\begin{proof}[Proof of Theorem~\ref{uestimate}]
\noindent Let $h=u^{\epsilon}$, where $\epsilon \in (0,1)$.  Applying the Bochner formula to $h$, we find:
\begin{equation} \label{bochner}
\frac{1}{2} \Delta_f \abs{\nabla h}^2 = \abs{ \hess \, h}^2 + \inner{\nabla h}{\nabla( \Delta_f h)} + Ric_f (\nabla h , \nabla h).
\end{equation}
Now we compute,
\begin{equation} 
\begin{split}
\Delta_f h &= \Delta h - \inner{\nabla h}{\nabla f},\\
                  &= \epsilon (\epsilon -1) u^{\epsilon-2} \abs{\nabla{u}}^2 +\epsilon u^{\epsilon-1}\Delta_f u, \\
                  &=\frac{(\epsilon-1)\abs{\nabla{h}}^2}{\epsilon h}.
\end{split}
\end{equation}
 Where the last equality uses the fact that $u$ is f-harmonic.  Further $Ric_f (\nabla h , \nabla h)  \geq -(n-1)H^2 \abs{\nabla h}^2$ and by the Schwarz inequality we find:
\begin{equation} \label{hessh}
\begin{split}
\abs{\hess \, h}^2 &\geq \frac{\abs{\Delta h}^2}{n},\\
                             &= \frac{1}{n}(\Delta_f h + \inner{\nabla f}{\nabla h})^2,\\
                             &=\frac{1}{n}(\frac{(\epsilon-1)\abs{\nabla{h}}^2}{\epsilon h} + \inner{\nabla f}{\nabla h})^2.\\
 \end{split}
 \end{equation}
 Also,
\begin{equation} \label{mixed}
\begin{split}
\inner{\nabla h}{\nabla (\Delta_f h)}&=\inner{\nabla h}{\nabla (\frac{(\epsilon-1)\abs{\nabla{h}}^2}{\epsilon h})},\\
                                                              &=\frac{\epsilon-1}{\epsilon h}\inner{\nabla h}{\nabla \abs{\nabla h}^2} - \frac{(\epsilon-1)\abs{\nabla h}^4}{\epsilon h^2}.
\end{split}
\end{equation}
Thus substituting \eqref{hessh} and \eqref{mixed} into \eqref{bochner},
\begin{equation}
\begin{split}
\frac{1}{2} \Delta_f \abs{\nabla h}^2 &\geq \frac{ (\epsilon-1)^2\abs{\nabla h}^4}{n\epsilon^2 h^2} + \frac{2(\epsilon-1) \abs{ \nabla h}^2}{n\epsilon h} \inner{\nabla h}{\nabla f} + \frac{1}{n}\inner{\nabla h}{\nabla f}^2\\
                                                               &\qquad+ \frac{\epsilon-1}{\epsilon h}\inner{\nabla h}{\nabla \abs{\nabla h}^2} - \frac{(\epsilon-1)\abs{\nabla h}^4}{\epsilon h^2} -(n-1)H^2 \abs{\nabla h}^2.
\end{split}
\end{equation}

We now examine two cases according to which term in $\Delta h$ dominates.  First, suppose $p \in \overline{B(q;2R)}$ such that $\inner{\nabla h}{\nabla f} \leq a\frac{\abs{\nabla h}^2}{h}$, for some constant $a$ to be determined.  Thus at $p$ we find:

\begin{equation}
\begin{split}
\frac{1}{2} \Delta_f \abs{\nabla h}^2 &\geq \frac{ (\epsilon-1)^2\abs{\nabla h}^4}{n\epsilon^2 h^2} + \frac{2(\epsilon-1) \abs{ \nabla h}^2}{n\epsilon h} (\frac{a\abs{\nabla h}^2}{h}) + \frac{1}{n}\inner{\nabla h}{\nabla f}^2\\
                                                                 & \qquad + \frac{\epsilon-1}{\epsilon h}\inner{\nabla h}{\nabla \abs{\nabla h}^2} - \frac{(\epsilon-1)\abs{\nabla h}^4}{\epsilon h^2} -(n-1)H^2 \abs{\nabla h}^2,\\
&\geq \frac{(\epsilon-1)^2 +2a\epsilon (\epsilon-1) -n\epsilon (\epsilon-1)}{n\epsilon^2}\frac{\abs{\nabla h}^4}{h^2} + \frac{\epsilon-1}{\epsilon h}\inner{\nabla h}{\nabla \abs{\nabla h}^2} -(n-1)H^2 \abs{\nabla h}^2.
\end{split}
\end{equation}
We note that the coefficient of $\frac{\abs{\nabla h}^4}{h^2}$ will be positive whenever
\begin{equation}
a < \frac{n\epsilon - \epsilon +1}{2\epsilon}.
\end{equation}
Now suppose to the contrary that $p \in \overline{B(q;2R)}$ such that $\inner{\nabla h}{\nabla f} \geq a\frac{\abs{\nabla h}^2}{h}$.  Thus at $p$ we find
\begin{equation}
\begin{split}
\frac{1}{2} \Delta_f \abs{\nabla h}^2 &\geq \frac{ (\epsilon-1)^2\abs{\nabla h}^4}{n\epsilon^2 h^2} + \frac{2(\epsilon-1)}{n\epsilon h}(\frac{h}{a}\inner{\nabla h}{\nabla f}) \inner{\nabla h}{\nabla f} + \frac{1}{n}\inner{\nabla h}{\nabla f}^2\\
                                                                & \qquad + \frac{\epsilon-1}{\epsilon h}\inner{\nabla h}{\nabla \abs{\nabla h}^2} - \frac{(\epsilon-1)\abs{\nabla h}^4}{\epsilon h^2} -(n-1)H^2 \abs{\nabla h}^2,\\
                                                                &=\frac{(\epsilon-1)^2 - n(\epsilon-1)}{n\epsilon^2 h^2}\abs{\nabla h}^4 + \frac{2(\epsilon-1) +\epsilon a}{n\epsilon a}\inner{\nabla h}{\nabla f}^2 + \frac{\epsilon-1}{\epsilon h}\inner{\nabla h}{\nabla \abs{\nabla h}^2} -(n-1)H^2 \abs{\nabla h}^2.
\end{split}
\end{equation}
Thus the coefficient of $\inner{\nabla h}{\nabla f}^2 $ will be positive whenever
\begin{equation}
a > \frac{2(1-\epsilon)}{\epsilon}.
\end{equation}
Hence provided $\epsilon \in (\frac{3}{4} ,1)$ there will exist an $a$ satisfying both of these inequalities.  In particular, choosing $\epsilon = \frac{7}{8}$, we may take $a= \frac{1}{2}$ and in either case,
\begin{equation} \label{estimate}
\frac{1}{2} \Delta_f \abs{\nabla h}^2 \geq (\frac{7n-6}{49nh^2}) \abs{\nabla h}^4 - \frac{1}{7h}\inner{\nabla h}{\nabla \abs{\nabla h}^2} -(n-1)H^2 \abs{\nabla h}^2.
\end{equation}
Therefore this estimate holds on all of $\overline{B(q;2R)}$. 

Now we wish to multiply $\abs{\nabla h}^2$ by a cut-off function to obtain a function which achieves its maximum inside $B(q;2R)$.  Let $g:[0,2R] \rightarrow [0,1]$ be a $C^3$ function with the following properties: \\
a.)$ g \vert _{[0,R]} =1$,\\
b.) $\supp(g) \subseteq [0,2R)$, \\
c.) $ \frac{-c}{R}\sqrt{g} \leq g\prime \leq 0$,\\
d.) $\abs{g \prime \prime} \leq \frac{c}{R^2}$,\\
for some $c > 0$.  Define $\phi : \overline{B(q;2R)} \rightarrow [0,1]$ by $\phi(x)=g(d(x,q))$ and set $G=\phi \abs{\nabla h}^2$.  We first compute $\nabla G = (\nabla \phi)(\abs{\nabla h}^2) + (\phi)(\nabla \abs{\nabla h}^2)$.  Hence
\begin{equation} \label{gradsquared}
\abs{\nabla h}^2 = \frac{G}{\phi},
\end{equation}
and
\begin{equation} \label{gradgradsquared}
\nabla \abs{\nabla h}^2 = \frac{\nabla G}{\phi} - \frac{\nabla \phi}{\phi ^2}G.
\end{equation}
Further, 
\begin{equation}
\begin{split}
\Delta_f G &=(\Delta \phi)(\abs{\nabla h}^2) + 2\inner{\nabla \phi}{ \nabla \abs{\nabla h}^2} + (\phi)(\Delta \abs{\nabla h}^2) - \inner{\nabla f}{ (\nabla \phi)\abs{\nabla h}^2} - \inner{\nabla f}{ (\phi)(\nabla \abs{\nabla h}^2)},\\
&= (\Delta \phi)(\abs{\nabla h}^2) + 2\inner{\nabla \phi}{ \nabla \abs{\nabla h}^2}  - \inner{\nabla f}{ (\nabla \phi)(\abs{\nabla h}^2)} + \phi \Delta_f \abs{\nabla h}^2.
\end{split}
\end{equation}
Thus, 
\begin{equation} \label{flaplace}
\begin{split}
\Delta_f \abs{\nabla h}^2 &= \frac{1}{\phi} \Delta_f G - \frac{\abs{\nabla h}^2}{\phi} \Delta_f \phi - 2\inner{\frac{\nabla \phi}{\phi}}{ \nabla\abs{\nabla h}^2},\\
&=\frac{1}{\phi} \Delta_f G- \frac{G}{\phi^2} \Delta_f \phi - 2\inner{\frac{\nabla \phi}{\phi}}{ \frac{\nabla G}{\phi} - \frac{\nabla \phi}{\phi^2}G}.
\end{split}
\end{equation}
Now substituting \eqref{gradsquared}, \eqref{gradgradsquared} and \eqref{flaplace} into \eqref{estimate} and solving for $\frac{1}{\phi} \Delta_f G$ we find that
\begin{equation}
\frac{1}{\phi} \Delta_f G \geq \frac{G}{\phi^2} \Delta_f \phi +2\inner{\frac{\nabla \phi}{\phi}}{ \frac{\nabla G}{\phi}} - 2\inner{\frac{\nabla \phi}{\phi}}{\frac{\nabla \phi}{\phi^2}G}  + (\frac{14n-12}{49nh^2}) \frac{G^2}{\phi^2} - \frac{2}{7h}\inner{\nabla h}{\frac{\nabla G}{\phi} - \frac{\nabla \phi}{\phi ^2}G} -2(n-1)H^2\frac{G}{\phi}.
\end{equation}

Now $G$ must achieve its maximum on the compact set $\overline{B(q;2R)}$ and since $G$ vanishes on the boundary, it must attain its maximum at some point $q_0 \in B(q;2R)$.  We assume $q_0$ is not in the cut locus of $q$, if it is we adjust $\phi$ slightly and interpret the following inequalities in the barrier sense.  Evaluating the above at $q_0$ and solving for the $G$ term,

\begin{equation}\label{almostdone}
(\frac{14n-12}{49nh^2}) G \leq -\Delta_f \phi+\frac{2}{\phi} \abs{\nabla \phi}^2-\frac{2}{7h}\inner{\nabla h}{\nabla \phi} +2(n-1)\phi H^2.
\end{equation}

Now we note that if $q_0 \in B(q;1)$, then since $\phi$ is the identity on $\overline{B(q;R)}$, $q_0$ is the maximum point of $\abs{\nabla h}$ on $\overline{B(q;R)}$.  Thus \eqref{almostdone} yields, 

\begin{equation}
\abs{\nabla u} \leq \sqrt{\frac{64n(n-1)}{7n-6}} H \supover{p}{B(q;2R)}{u},
\end{equation}
on ${B(q;R)}$, which is the desired estimate.  Thus we may assume $q_0 \notin B(q;1)$.
Now to control the f-Laplacian term in \eqref{almostdone}, note that $\phi$ is a radial function and hence $\Delta_f \phi = g^\prime \Delta_f r + g^{\prime \prime}$, where $r(x)=d(x,q)$.  Now since ${Ric_f \geq -(n-1)H^2}$ and $d(q,q_0) \geq 1$, the Laplacian estimate of \cite{MR2577473} gives,
\begin{equation}
\Delta_f r(q_0) \leq \alpha + (n-1)H^2 (2R-1).
\end{equation}
Further by the construction of $\phi$, $\frac{-c}{R} \sqrt{g} \leq g^\prime \leq 0$ and $g^{\prime \prime} \geq -\frac{c}{R^2}$.  Hence,
\begin{equation} \label{deltaphibd}
\begin{split}
\Delta_f \phi &\geq \frac{-c}{R}\Delta_f(r) -\frac{c}{R^2},\\ 
                       & \geq \frac{-c \alpha}{R} - c(n-1)H^2 \frac{2R-1}{R} -\frac{c}{R^2}.
\end{split}
\end{equation}
Finally to control the $\inner{\nabla h}{ \nabla \phi}$ term, note that for any $\delta \geq 0$,
\begin{equation} \label{crossterm}
2\abs{\inner{\nabla h}{ \nabla \phi}} \leq 2\abs{\nabla h} \abs{\nabla \phi}  \leq \frac{\delta \phi}{h} \abs{\nabla h}^2 + \frac{h}{\delta \phi} \abs{ \nabla \phi}^2.
\end{equation}

Thus, substituting \eqref{crossterm} and \eqref{deltaphibd} into \eqref{almostdone} and collecting terms we find:
\begin{equation} 
(\frac{14n-12}{49nh^2} - \frac{\delta}{7h^2}) G \leq \frac{c \alpha}{R} +\frac{c}{R^2}+(2+\frac{1}{7\delta})\frac{\abs{\nabla \phi}^2}{\phi} +2(n-1)H^2(\phi + c \frac{R-1}{R}) .
\end{equation}
Now choosing $\delta=\frac{1}{7}$, noting that $\frac{\abs{\nabla \phi}^2}{\phi} \leq \frac{c^2}{R^2}$ and using $R \geq 1$ we find,
\begin{equation} 
\frac{13n-12}{49nh^2}G \leq \frac{c \alpha + c+3c^2}{R} +2(n-1)H^2(\phi + c) .
\end{equation}
Finally restricting to $B(q;R)$ and using the definition of $h$ we conclude that
\begin{equation} \label{final}
\abs{\nabla u} \leq \sqrt{ \frac{64n(c \alpha + c+3c^2)}{13n-12} \frac{1}{R} +\frac{128n(n-1)(1+ c)}{13n-12}H^2} \supover{p}{B(q;2R)}{u}.
\end{equation}
\end{proof}

\section{Proof of Theorem 3}

In this section we prove Theorem~\ref{logestimate}.  As noted above the proof begins by applying the Bochner formula to $\log(u)$ and then using the assumption to control the Hessian term with $\Delta_f \log(u)$.

\begin{proof}[Proof of Theorem~\ref{logestimate}]
Define $h=\log(u)$. First we note that,
\begin{equation}
\begin{split}
\Delta_f h &=\Delta h - \inner{\nabla f}{ \nabla h}, \\
                  &=\frac 1{u} \Delta_f u - \abs{ \frac {\nabla u} {u} }^2.
\end{split}
\end{equation}
Thus since $u$ is f-harmonic, $\Delta_f h = - \abs{ \nabla h}^2$.  Now applying the Bochner formula we find:
\begin{equation}
\frac 1{2} \Delta_f \abs{ \nabla h}^2 = \abs{\hess \, h}^2 + \inner{\nabla h}{ \nabla (\Delta_f h)} + Ric_f (\nabla h, \nabla h).
\end{equation}
Further, by the Schwarz inequality $\abs{\hess \, h}^2 \geq \frac {\abs{\Delta h}^2}{n}$. Now,
\begin{equation}
\begin{split}
\abs{ \Delta h}^2 &= \abs{ \Delta_f h + \inner{\nabla h}{\nabla f}}^2,\\
                               &=\abs{ - \abs{ \nabla h}^2 + \inner{\nabla h}{\nabla f}}^2,\\
                               &= \abs{ \nabla h}^4 - 2\abs{\nabla h}^2 \inner{\nabla h}{\nabla f} + \inner{\nabla h}{\nabla f}^2.
 \end{split}
 \end{equation}
Thus if the first assumption holds we find:
\begin{equation}
\begin{split}
\abs{ \Delta h}^2 &\leq \abs{ \nabla h}^4 - 2a\abs{\nabla h}^4,\\
                               &=(1- 2a)\abs{ \nabla h}^4.
 \end{split}
 \end{equation}
If the second assumption holds, then:
 \begin{equation}
\begin{split}
\abs{ \Delta h}^2&\leq \abs{ \nabla h}^4 - \inner{\nabla f}{\nabla h}^2 + \inner{\nabla h}{\nabla f}^2,\\
                              &\leq \abs{ \nabla h}^4.
 \end{split}
 \end{equation}
Also, $Ric_f (\nabla h, \nabla h) \geq -(n-1)H^2 \abs{\nabla h}^2$. Thus substituting the above into the Bochner formula in either case we find that,
 \begin{equation} \label{sboch2}
 \frac{1}{2} \Delta_f \abs{\nabla h}^2 \geq \frac{1-2a}{n} \abs{\nabla h}^4 - \inner{\nabla h}{\nabla(\abs{\nabla h}^2)}-(n-1)H^2 \abs{\nabla h}^2.
 \end{equation}
From here the proof follows the same argument as Theorem~\ref{uestimate}, yielding
\begin{equation}
\abs{\nabla h} \leq \sqrt{\left({\frac{2nc\alpha + 2nc}{3(1-2\alpha)} +\frac{2n(2n+2-4a)c^2}{3(1-2\alpha)^2}}\right) \frac{1}{R} + \frac{4n(c+1)(n-1)}{3(1-2a)}H^2},
\end{equation}
where $a$ is taken to be $0$ in the case the second assumption holds.
\end{proof}

\section{Acknowledgements}
I wish to thank Guofang Wei for her guidance, tutelage and support.  In addition, I'd like to thank Zhiqin Lu and Jeffrey Case for their valuable discussions and advice. 

\bibliographystyle{plain}
 \bibliography{citations}
\end{document}